\newtheorem{theorem}{Theorem}[section]
\newtheorem{lemma}[theorem]{Lemma}
\newtheorem{proposition}[theorem]{Proposition}
\newtheorem{corollary}[theorem]{Corollary}
\theoremstyle{definition}
\newtheorem{definition}[theorem]{Definition}
\newtheorem{remark}[theorem]{Remark}
\newtheorem{example}[theorem]{Example}
\numberwithin{equation}{section}
\def\DD{D\kern-.7em\raise0.4ex\hbox{\char '55}\kern.33em}
\title[Jensen's Equation on Involution-Generated Groups]
{Jensen's Functional Equation on Involution-Generated Groups: A Square-Root Criterion, Rigidity Phenomena,\\ and Obstruction Spaces}
\author{\DD\d{\u a}ng V\~o Ph\'uc }
\address{Department of Mathematics, FPT University, An Phu Thinh New Urban Area, Quy Nhon, Vietnam}
\email{dangphuc150488@gmail.com}
\thanks{ORCID: \url{https://orcid.org/0000-0002-6885-3996}}
\subjclass[2020]{Primary 39B52; Secondary 20F05, 20F55, 20E22, 22E15.}
\keywords{Jensen's functional equation, involution-generated groups, word identities, square-root criterion, semidirect-product rigidity, obstruction spaces, elementary abelian $2$-quotients, orthogonal groups.}
\begin{document}

\begin{abstract}
Let $G$ be a group and $H$ an abelian group. We study normalized solutions $f:G\to H$ of the Jensen equation
\[
f(xy)+f(xy^{-1})=2f(x)\qquad(\forall x,y\in G),\qquad f(e)=0,
\]
and the companion equation
\[
f(xy)+f(x^{-1}y)=2f(y)\qquad(\forall x,y\in G),\qquad f(e)=0.
\]
Motivated by the combinatorics of transpositions in $S_n$, we isolate a structural hypothesis on $G$, formulated purely in terms of involutions and square roots. Under this hypothesis, generator word identities force every normalized solution of the Jensen equation to be automatically a group homomorphism, thereby satisfying the companion equation. Our approach does not require $2$ to be invertible in $H$ and yields the exact identification
\[
S_1(G,H)=S_{1,2}(G,H)=\mathrm{Hom}(G,H)\cong\mathrm{Hom}(G_{\mathrm{ab}},H).
\]
Beyond recovering the symmetric-group result, we characterize the generalized dihedral case: with reflection generators, the criterion holds exactly when the square map on the abelian rotation subgroup is surjective. Furthermore, we establish a structural rigidity phenomenon for arbitrary semidirect products $A\rtimes_\sigma C_2$: the existence of any involution generating set satisfying the square-root criterion forces $\sigma$ to be inversion and the square map on $A$ to be surjective. When the criterion fails, we develop an exact algebraic theory of the obstruction space $\mathcal{O}(G,H):=S_1(G,H)/\mathrm{Hom}(G,H)$ for general involution-generated groups. We achieve this by identifying the Jensen quotient with the maximal elementary abelian $2$-quotient $G/G^{(2)}$, where $G^{(2)}=\langle g^2:g\in G\rangle$, yielding an explicit computation for even dihedral groups and a sharp dimension formula for finite $G/G^{(2)}$. Finally, extending our framework beyond discrete settings, we prove that the orthogonal group $O(n)$ for $n\ge 2$, generated by hyperplane reflections, satisfies the square-root criterion; consequently, every normalized Jensen solution on $O(n)$ is a homomorphism and $S_1(O(n),H)\cong H[2]$.
\end{abstract}

\maketitle

\section{Introduction}

On $\mathbb{R}$, Jensen's functional equation is rooted in Jensen's classical work on convexity and mean inequalities \cite{Jensen1906}. When transplanted to general groups, the equation loses its order-theoretic interpretation and instead serves as a fundamental bridge between functional analysis and algebraic structure. Two canonical Jensen-type identities considered in this paper are:
\begin{align}
\tag{J1}\label{eq:J1}
f(xy)+f(xy^{-1})&=2f(x),\\
\tag{J2}\label{eq:J2}
f(xy)+f(x^{-1}y)&=2f(y).
\end{align}
Here \eqref{eq:J2} is introduced simply as a companion identity to \eqref{eq:J1} for the purposes of this work; the main point is that every group homomorphism $f:G\to H$ automatically satisfies both identities. We shall study normalized solutions, namely maps $f:G\to H$ satisfying in addition $f(e)=0$. 

A systematic investigation of Jensen-type equations on groups goes back to Ng \cite{Ng1990,Ng1999,Ng2001,Ng2005}, who established foundational structural descriptions across various classes of groups. However, a persistent technical divide in the literature concerns the role of the codomain $H$: many classical and modern arguments rely heavily on the assumption that division by $2$ is uniquely defined in $H$, or at least that $H$ is $2$-torsion-free ($H[2]=0$), in order to decompose solutions into symmetric and antisymmetric parts. For related investigations into torsion phenomena, the limitations of $2$-divisibility constraints, and explicit torsion confinement bounds in Jensen-type functional equations on groups, see recent work by the author \cite{Phuc2026}.

From a purely algebraic perspective, removing codomain divisibility constraints transforms the study of Jensen solutions into an intrinsic investigation of group presentations, word identities, and elementary abelian quotients of the domain $G$. The primary goal of this paper is to shift the analytical focus entirely from the codomain to the domain. We isolate a purely group-theoretic mechanism, formulated via involutions and square roots, that forces normalized solutions of \eqref{eq:J1} to behave linearly, completely independent of the divisibility or torsion properties of the abelian codomain $H$.

\subsection*{Motivation, core mechanism, and methodology}

A striking combinatorial catalyst for our inquiry is the symmetric group $S_n$. Ng \cite{Ng2001} asserted that all normalized Jensen solutions on $S_n$ are homomorphisms; an elementary, self-contained proof was later discovered by Le and Thai \cite{LeThai2011}. Close inspection of their argument reveals that a decisive structural ingredient is not the specific symmetric presentation, but rather a remarkable property of transpositions: the product $\tau_1\tau_2$ of any two transpositions admits a square root in $S_n$.

In this paper, we extract, formalize, and vastly generalize that underlying mechanism by introducing a structural condition, termed the \emph{square-root criterion} $(\mathrm{SR}_2)$, for groups generated by a set of involutions $\mathcal I$. Our approach relies on a three-step intrinsic methodology that converts classical three-variable switching identities into a global reduction algorithm on words of involutions, allowing us to systematically shorten and cancel generator terms without requiring $2$-divisibility in $H$:
\begin{itemize}
\item \textbf{Torsion-collapse:} We prove that if $G$ is generated by involutions, any normalized solution $f\in S_1(G,H)$ necessarily vanishes when multiplied by $2$, forcing all function values to reside entirely within the $2$-torsion subgroup $H[2]$.
\item \textbf{Reordering symmetry:} Once confined to $H[2]$, our approach converts classical three-variable switching identities into a global reduction algorithm on words of involutions. Specifically, the $f$-value of any word in the generators is invariant under adjacent transpositions of words.
\item \textbf{Square-root activation:} Under the hypothesis $(\mathrm{SR}_2)$, the existence of square roots for products $ab$ ($a,b\in\mathcal I$) activates this reordering, allowing us to systematically shorten and cancel generator terms to yield full additive linearity ($f(xy)=f(x)+f(y)$).
\end{itemize}
This methodology establishes our first main theorem: under $(\mathrm{SR}_2)$, every normalized solution of \eqref{eq:J1} is a group homomorphism and therefore satisfies \eqref{eq:J2}, yielding the exact identification
\[
S_1(G,H)=S_{1,2}(G,H)=\mathrm{Hom}(G,H)\cong\mathrm{Hom}(G_{\mathrm{ab}},H).
\]

\subsection*{The structural landscape and main contributions}

By establishing stability properties of $(\mathrm{SR}_2)$, such as quotient descent and a local rank--$2$ sufficient condition, we demonstrate that this mechanism unifies and governs a remarkably diverse landscape of mathematical structures, spanning discrete combinatorics, algebraic rigidity, defect quantification, and continuous Lie geometry:

\smallskip\noindent
\textbf{$\bullet$ Combinatorial and Coxeter groups.} We recover the symmetric-group result transparently and show that Coxeter systems whose standard generator products have odd orders automatically satisfy a local version of our criterion, $(\mathrm{SR}_2^{\mathrm{loc}})$, forcing all normalized Jensen solutions to be homomorphic. The Coxeter terminology and standard structural background follow the usual references \cite{BjornerBrenti2005,Davis2008}.

\smallskip\noindent
\textbf{$\bullet$ Semidirect-product rigidity.} Moving beyond classical presentations, we characterize generalized dihedral groups $\mathrm{Dih}(A)=A\rtimes C_2$: the criterion holds if and only if the squaring map $a\mapsto a^2$ on the abelian group $A$ is surjective. More profoundly, we prove a \emph{rigidity theorem} for arbitrary semidirect products $G=A\rtimes_\sigma C_2$ with an involution automorphism $\sigma\in\mathrm{Aut}(A)$. We show that the existence of any square-root involution generating set strictly forbids exotic automorphisms: it forces $\sigma$ to be inversion ($\sigma(a)=a^{-1}$) and requires the squaring map on $A$ to be surjective. Our method analyzes the norm operator $N(a)=a\sigma(a)$ and the difference map, proving that $(\mathrm{SR}_2)$ inherently locks the group into a generalized dihedral structure.

\smallskip\noindent
\textbf{$\bullet$ Intrinsic defect quantification.} When $(\mathrm{SR}_2)$ fails, solutions need not be homomorphic. Rather than merely exhibiting isolated counterexamples, we construct an exact algebraic theory of the obstruction space
\[
\mathcal O(G,H):=S_1(G,H)/\mathrm{Hom}(G,H)
\]
for arbitrary involution-generated groups. By introducing the canonical \emph{Jensen equivalence relation} $\equiv_J$ generated by $xy\sim xy^{-1}$ and identifying it with quotienting by the subgroup generated by all squares, we prove an explicit vector-space isomorphism over $\mathbb F_2$ that measures the exact defect size. Applying this theory to even dihedral groups $D_{2k}$ (where squaring is not surjective), we compute the obstruction space precisely, proving that $\mathcal O(D_{2k},H)\cong H[2]$.

\smallskip\noindent
\textbf{$\bullet$ Continuous Lie geometry: Orthogonal groups.} Finally, we demonstrate that our framework is not confined to discrete or combinatorial settings by applying it to continuous Lie groups without imposing any analytical regularity (such as continuity or measurability) on the solutions. Let $O(n)$ be the real orthogonal group generated by the classical set $\mathcal R$ of hyperplane reflections, transformations connected with reflection groups, Householder transformations, and the Cartan--Dieudonn\'e theorem \cite{Coxeter1934,Householder1958,Gallier2011}. Using the geometric fact that the product of two hyperplane reflections is a rotation in a two-dimensional subspace, which always admits a square-root rotation in $SO(2)$, we prove that $O(n)$ satisfies $(\mathrm{SR}_2(\mathcal R))$ for all $n\ge 2$. Consequently, every normalized Jensen solution on $O(n)$ into an arbitrary abelian group is a homomorphism, yielding $S_1(O(n),H)\cong H[2]$. We contrast this sharp result with the unitary group $U(n)$, where spectral determinants obstruct involution generation entirely.

\subsection*{Related work and comparison}

Our approach provides a complementary perspective to several established directions in functional equations. For complex-valued solutions on topological groups, Stetk\ae r \cite{Stetkaer2003} provided a detailed structural description on the quotient $G/[G,[G,G]]$; broader background on functional equations on groups is given in \cite{Stetkaer2013}. While analytical methods often emphasize topological continuity and codomain regularity, our work isolates a purely algebraic domain condition that enforces linearity globally. On semigroups with endomorphisms, recent contributions such as \cite{FadliZeglamiKabbaj2016,Akkaoui2023,Aissi2024} frequently assume $2$-torsion-free codomains to manipulate Pexider- or Jensen-type identities. Lie-group versions of related d'Alembert and Wilson equations \cite{Friis2004} supply a useful comparison point for the continuous part of the present work. By dispensing with codomain divisibility entirely, our framework reveals that on involution-generated structures, the behavior of Jensen solutions is governed strictly by the interaction between the domain's abelianization $G_{\mathrm{ab}}$ and the codomain's $2$-torsion $H[2]$.

\subsection*{Notation}
Groups are written multiplicatively, with identity element denoted by $e$. The codomain $H$ is written additively, with zero element $0$. For a group $G$, write
\[
G_{\mathrm{ab}}:=G/[G,G]
\]
for the abelianization, and denote by $\pi:G\to G_{\mathrm{ab}}$ the canonical projection. When convenient, we write $G_{\mathrm{ab}}$ additively.

We denote by $S_1(G,H)$ the set of normalized solutions of \eqref{eq:J1}, namely
\[
S_1(G,H):=\bigl\{f:G\to H:\ f(e)=0,\ \ f(xy)+f(xy^{-1})=2f(x)\ \text{for all }x,y\in G\bigr\}.
\]
Similarly, $S_2(G,H)$ denotes the set of normalized solutions of \eqref{eq:J2}, namely
\[
S_2(G,H):=\bigl\{f:G\to H:\ f(e)=0,\ \ f(xy)+f(x^{-1}y)=2f(y)\ \text{for all }x,y\in G\bigr\},
\]
and we write
\[
S_{1,2}(G,H):=S_1(G,H)\cap S_2(G,H).
\]

We also write
\[
H[2]:=\{h\in H:2h=0\}
\]
for the $2$-torsion subgroup of $H$.

\begin{remark}\label{rem:constant-shift}
For completeness, note that if $f:G\to H$ satisfies \eqref{eq:J1}, then the translated map
\[
f_0(x):=f(x)-f(e)
\]
is normalized and still satisfies \eqref{eq:J1}. Indeed, substituting $x=e$ into \eqref{eq:J1} gives
$f(y)+f(y^{-1})=2f(e)$ for all $y\in G$, and therefore
\[
f_0(xy)+f_0(xy^{-1})=2f_0(x).
\]
Thus every not necessarily normalized solution of \eqref{eq:J1} is a constant translate of a normalized one. The same observation applies to \eqref{eq:J2}.
\end{remark}

\subsection*{Organization of the paper}
Section~\ref{sec:criterion} introduces the square-root condition $(\mathrm{SR}_2)$, develops the fundamental $2$-torsion collapse and reordering identities derived from \eqref{eq:J1}, and proves the main homomorphism rigidity theorem. Section~\ref{sec:structure} establishes stability criteria for $(\mathrm{SR}_2)$, including quotient descent, a local rank--$2$ sufficient condition, and an obstruction to direct-product stability. Section~\ref{sec:obstruction} constructs the exact algebraic theory of the obstruction space $\mathcal O(G,H)$ via the Jensen equivalence relation, identifies this relation with the square quotient $G/G^{(2)}$, and computes the even-dihedral defect. Section~\ref{sec:apps} presents our main applications: recovering symmetric groups, establishing the semidirect-product rigidity theorem for generalized dihedral structures, treating Coxeter systems, and proving the continuous geometry result for orthogonal groups $O(n)$. Finally, an appendix records a three-variable switching identity utilized in the proof of the main criterion.

\section{A square-root criterion and the main rigidity theorem}\label{sec:criterion}

This section isolates the foundational domain condition used throughout the paper and establishes our main rigidity theorem. The architecture of the proof strictly follows the three-step methodology outlined in the introduction: we first show that on any involution-generated group, Jensen's equation undergoes a torsion-collapse into a $2$-torsion-valued identity (Proposition~\ref{prop:toolbox}). We then prove that this torsion-collapse upgrades classical switching formulas into a global reordering symmetry for involution words. Finally, we demonstrate how the square-root hypothesis activates this symmetry, converting weaker word equivalence into full additive homomorphy (Theorem~\ref{thm:hom}).

\begin{definition}[(SR$_2$)]\label{def:SR2}
Let $G$ be a group and $\mathcal I\subseteq G$ a set of elements satisfying $i^2=e$ for all $i\in\mathcal I$ (in particular, $\mathcal I$ may contain the identity). We say that $G$ satisfies
\emph{$\mathrm{SR}_2(\mathcal I)$} if
\begin{enumerate}
\item $G=\langle \mathcal I\rangle$;
\item for every $a,b\in\mathcal I$ there exists $t\in G$ such that $t^2=ab$.
\end{enumerate}
When $\mathcal I$ is clear we simply write $(\mathrm{SR}_2)$.
\end{definition}

\subsection{Basic identities derived from \textup{(J1)}}

The next proposition collects the only identities from \eqref{eq:J1} needed for the main theorem. A key consequence is that once $G$ is generated by involutions, every normalized solution becomes $H[2]$-valued. This $2$-torsion collapse is what ultimately turns the switching identity from Appendix~\ref{app:switch} into a reordering principle.

\begin{proposition}\label{prop:toolbox}
Let $G$ be a group, $H$ an abelian group, and $f:G\to H$ satisfy \eqref{eq:J1} with $f(e)=0$. Then:
\begin{enumerate}
\item[(a)] (\emph{Oddness and square rule}) $f(x^{-1})=-f(x)$ and $f(x^2)=2f(x)$ for all $x\in G$.
\item[(b)] If $i\in G$ is an involution, then $2f(i)=0$, and for all $x\in G$ one has $2f(xi)=2f(x)$.
\item[(c)] If $G$ is generated by involutions, then $2f\equiv 0$ on $G$.
\item[(d)] (\emph{Reordering}) If $G$ is generated by involutions, then for all $x,y,z\in G$,
\[
f(xyz)=f(xzy).
\]
Equivalently, in any involution word $i_1\cdots i_r$, swapping adjacent factors does not change the $f$--value.
\end{enumerate}
\end{proposition}

\begin{proof}
(a) Set $x=e$ in \eqref{eq:J1} to get $f(y)+f(y^{-1})=0$. Set $y=x$ to obtain $f(x^2)=2f(x)$.

(b) If $i^2=e$, then by (a) $2f(i)=f(i^2)=f(e)=0$. Also \eqref{eq:J1} with $(x,y)=(x,i)$ gives
$f(xi)+f(xi^{-1})=2f(x)$; since $i^{-1}=i$ this yields $2f(xi)=2f(x)$.

(c) If $g=i_1\cdots i_r$ with involutions $i_k$, we prove $2f(g)=0$ by induction on $r$. The case $r=0$ is $g=e$. For $r\ge 1$, write $g=Xi$ with $i$ an involution and apply (b): $2f(Xi)=2f(X)$, which is $0$ by induction.

(d) Using Lemma~\ref{lem:switch} in Appendix~\ref{app:switch}, we have for all $x,y,z\in G$,
\[
f(xyz)-f(xzy)=f(xy^{-1}z^{-1})-f(xz^{-1}y^{-1}).
\]
Since $G$ is generated by involutions, part (c) gives $2f\equiv 0$ on $G$. In particular, for every $w\in G$ one has $-f(w)=f(w)$. Applying \eqref{eq:J1} to $(u,v)=(xy^{-1},z)$ yields
\[
f(xy^{-1}z)+f(xy^{-1}z^{-1})=2f(xy^{-1})=0,
\]
hence $f(xy^{-1}z^{-1})=-f(xy^{-1}z)=f(xy^{-1}z)$. Similarly, applying \eqref{eq:J1} to $(u,v)=(xz^{-1},y)$ gives
$f(xz^{-1}y^{-1})=f(xz^{-1}y)$. Therefore
\[
f(xyz)-f(xzy)=f(xy^{-1}z)-f(xz^{-1}y).
\]
Now apply Lemma~\ref{lem:switch} to $(x,y,z)=(x,y^{-1},z)$ to obtain
\[
f(xy^{-1}z)=2f(x)-f(xz^{-1}y).
\]
Using again $2f(x)=0$, we get $f(xy^{-1}z)=-f(xz^{-1}y)=f(xz^{-1}y)$, and hence $f(xyz)-f(xzy)=0$, proving $f(xyz)=f(xzy)$.
\end{proof}

\subsection{Main theorem}

\begin{theorem}[A square-root criterion for homomorphisms]\label{thm:hom}
Let $G$ be a group and $\mathcal I$ a generating set of involutions satisfying $(\mathrm{SR}_2)$. Let $H$ be an arbitrary abelian group. If $f\in S_1(G,H)$, then $f$ is a group homomorphism. In particular $f\in S_2(G,H)$, and
\[
S_1(G,H)=S_{1,2}(G,H)=\mathrm{Hom}(G,H)\cong\mathrm{Hom}(G_{\mathrm{ab}},H).
\]
\end{theorem}

\begin{proof}
If $\mathcal I=\varnothing$, then $G=\langle\mathcal I\rangle=\{e\}$, and the assertion is immediate. Hence assume $\mathcal I\ne\varnothing$.

Fix $j\in\mathcal I$ and define
\[
c_j(x):=f(xj)-f(x)\qquad(x\in G).
\]
By Proposition~\ref{prop:toolbox}(b), we have $2c_j(x)=0$ for all $x\in G$.

\smallskip\noindent
\emph{Step 1: invariance under right-multiplication by $j$.}
We have
\[
c_j(xj)=f(xjj)-f(xj)=f(x)-f(xj)=-c_j(x).
\]
Since $2c_j(x)=0$, this implies $-c_j(x)=c_j(x)$, hence
\[
c_j(xj)=c_j(x).
\]

\smallskip\noindent
\emph{Step 2: invariance under right-multiplication by squares.}
Let $t\in G$. We claim that
\[
c_j(xt^2)=c_j(x).
\]
We first show that
\[
f(xt^2j)=f(xjt^2).
\]
Indeed, write
\[
xt^2j=x\cdot t\cdot (tj).
\]
Applying Proposition~\ref{prop:toolbox}(d) with $(y,z)=(t,tj)$ gives
\[
f(xt^2j)=f\bigl(x\cdot (tj)\cdot t\bigr)=f(xtjt).
\]
Next write
\[
xtjt=x\cdot t\cdot (jt),
\]
and apply Proposition~\ref{prop:toolbox}(d) again, now with $(y,z)=(t,jt)$, to obtain
\[
f(xtjt)=f\bigl(x\cdot (jt)\cdot t\bigr)=f(xjt^2).
\]
Hence
\[
f(xt^2j)=f(xjt^2).
\]
Now apply \eqref{eq:J1} to $(x,y)=(xt,t)$:
\[
f(xt^2)+f(x)=2f(xt).
\]
Since $G$ is generated by involutions, Proposition~\ref{prop:toolbox}(c) gives $2f\equiv 0$, hence
\[
f(xt^2)+f(x)=0.
\]
Therefore
\[
f(xt^2)=-f(x)=f(x),
\]
because also $2f(x)=0$.

Similarly, applying \eqref{eq:J1} to $(x,y)=(xjt,t)$ gives
\[
f(xjt^2)+f(xj)=2f(xjt)=0,
\]
so
\[
f(xjt^2)=-f(xj)=f(xj),
\]
since $2f(xj)=0$.

Combining these identities, we get
\[
c_j(xt^2)=f(xt^2j)-f(xt^2)=f(xjt^2)-f(xt^2)=f(xj)-f(x)=c_j(x).
\]

\smallskip\noindent
\emph{Step 3: invariance under right-multiplication by any generator.}
Let $a\in\mathcal I$. By $(\mathrm{SR}_2)$, there exists $t\in G$ such that
\[
t^2=ja.
\]
Since $j^2=e$, this implies
\[
a=j t^2.
\]
Hence
\[
c_j(xa)=c_j(xjt^2).
\]
Applying Step 2 with $x$ replaced by $xj$, we obtain
\[
c_j(xjt^2)=c_j(xj).
\]
By Step 1,
\[
c_j(xj)=c_j(x).
\]
Thus
\[
c_j(xa)=c_j(x)
\qquad(\forall x\in G,\ a\in\mathcal I).
\]
Since $\mathcal I$ generates $G$, it follows that $c_j(x)$ is constant in $x$. Evaluating at $x=e$ gives
\[
c_j(x)\equiv c_j(e)=f(j)-f(e)=f(j).
\]
Therefore
\[
f(xj)=f(x)+f(j)\qquad(\forall x\in G,\ j\in\mathcal I).
\]
Now let $y=i_1\cdots i_r$ be any word in the generators $\mathcal I$. Repeatedly applying the previous identity gives
\[
f(xy)=f(x)+\sum_{k=1}^r f(i_k).
\]
Taking $x=e$ in the same formula, we obtain
\[
f(y)=\sum_{k=1}^r f(i_k).
\]
Hence
\[
f(xy)=f(x)+f(y)
\qquad(\forall x,y\in G),
\]
so $f$ is a group homomorphism.

Finally, \eqref{eq:J2} follows from homomorphy and Proposition~\ref{prop:toolbox}(a):
\[
f(xy)+f(x^{-1}y)
=(f(x)+f(y))+(f(x^{-1})+f(y))
=(f(x)+f(x^{-1}))+2f(y)
=2f(y).
\]
\end{proof}

\begin{remark}\label{rem:hom-in-S12}
Conversely, every group homomorphism $f:G\to H$ belongs to $S_1(G,H)\cap S_2(G,H)$. Indeed,
\[
f(xy)+f(xy^{-1})=(f(x)+f(y))+(f(x)-f(y))=2f(x),
\]
and similarly
\[
f(xy)+f(x^{-1}y)=(f(x)+f(y))+(-f(x)+f(y))=2f(y).
\]
\end{remark}

\begin{theorem}[Abelianization under $(\mathrm{SR}_2)$]\label{thm:ab-C2}
Let $G$ be a group and $\mathcal I$ a generating set of involutions such that
$(\mathrm{SR}_2(\mathcal I))$ holds. Then all elements of $\pi(\mathcal I)$ coincide in
$G_{\mathrm{ab}}$. In particular, $G_{\mathrm{ab}}$ is generated by a single
element of order dividing $2$; consequently $G_{\mathrm{ab}}\cong 1$ or $C_2$.
\end{theorem}

\begin{proof}
Since $G=\langle \mathcal I\rangle$ and each $i\in\mathcal I$ is an involution, the abelianization $G_{\mathrm{ab}}$ is generated by the elements $\pi(i)$ of order dividing $2$. Hence $G_{\mathrm{ab}}$ has exponent $2$, i.e.\ $2x=0$ for all $x\in G_{\mathrm{ab}}$.

Now let $a,b\in\mathcal I$. By $(\mathrm{SR}_2)$ there exists $t\in G$ such that $t^2=ab$. Passing to $G_{\mathrm{ab}}$ yields
\[
2\pi(t)=\pi(t^2)=\pi(ab)=\pi(a)+\pi(b).
\]
But $2\pi(t)=0$ in $G_{\mathrm{ab}}$, so $\pi(a)+\pi(b)=0$, i.e.\ $\pi(a)=\pi(b)$. Thus all elements of $\pi(\mathcal I)$ coincide. Since $\pi(\mathcal I)$ generates $G_{\mathrm{ab}}$, the group $G_{\mathrm{ab}}$ is cyclic generated by an element of order at most $2$, hence $G_{\mathrm{ab}}\cong 1$ or $C_2$.
\end{proof}

\begin{corollary}[Parity normal form via $\varepsilon$]\label{cor:parity}
Assume $(\mathrm{SR}_2(\mathcal I))$ and let $H$ be an abelian group.

If $G_{\mathrm{ab}}\cong C_2$, then there exists a unique homomorphism
\[
\varepsilon:G\to C_2
\]
such that $\varepsilon(i)=1$ for all $i\in\mathcal I$, and every normalized solution
$f\in S_1(G,H)$ is determined by a unique choice of $u\in H[2]$ via
\[
f(g)=
\begin{cases}
0,& \varepsilon(g)=0,\\
u,& \varepsilon(g)=1.
\end{cases}
\]
If $G_{\mathrm{ab}}=1$, then the only normalized solution in $S_1(G,H)$ is the zero map.

Equivalently, if $g=i_1\cdots i_r$ is any word in $\mathcal I$, then in the case
$G_{\mathrm{ab}}\cong C_2$ one has $f(g)=0$ for $r$ even and $f(g)=u$ for $r$ odd,
whereas in the case $G_{\mathrm{ab}}=1$ one has $f(g)=0$ for all $g\in G$.
\end{corollary}

\begin{proof}
By Theorem~\ref{thm:hom}, every $f\in S_1(G,H)$ with $f(e)=0$ is a homomorphism. By Proposition~\ref{prop:toolbox}(b), for each $i\in\mathcal I$ we have $2f(i)=0$, hence $f(i)\in H[2]$. By Theorem~\ref{thm:ab-C2}, the abelianization $G_{\mathrm{ab}}$ is trivial or isomorphic to $C_2$, and the images of all elements of $\mathcal I$ coincide.

If $G_{\mathrm{ab}}=1$, then every homomorphism $G\to H$ is trivial, so the only normalized solution is the zero map.

Assume now that $G_{\mathrm{ab}}\cong C_2$. By Theorem~\ref{thm:ab-C2}, all elements of $\pi(\mathcal I)$ coincide, and this common value is the unique nontrivial element of $G_{\mathrm{ab}}\cong C_2$. Hence there is a unique homomorphism
\[
\varepsilon:G\to C_2
\]
sending every $i\in\mathcal I$ to $1$. Since $f$ factors through $G_{\mathrm{ab}}$, the value $u:=f(i)$ is independent of the choice of $i\in\mathcal I$, and $u\in H[2]$. Moreover, $f$ vanishes on $\varepsilon^{-1}(0)$ and is constant with value $u$ on $\varepsilon^{-1}(1)$, which gives the displayed formula.

Finally, if $g=i_1\cdots i_r$ is a word in $\mathcal I$, then $\varepsilon(g)=r\bmod 2$, so the parity description follows.
\end{proof}

\begin{remark}\label{rem:hom-H2}
Under $(\mathrm{SR}_2(\mathcal I))$, the previous results show that
\[
\mathrm{Hom}(G,H)\cong
\begin{cases}
0, & G_{\mathrm{ab}}=1,\\
H[2], & G_{\mathrm{ab}}\cong C_2.
\end{cases}
\]
Thus the classification of normalized Jensen solutions is completely governed by the abelianization of $G$ and the $2$-torsion of the codomain.
\end{remark}

\section{Structural properties and permanence criteria of \texorpdfstring{$(\mathrm{SR}_2)$}{(SR2)}}\label{sec:structure}

To deploy the square-root criterion effectively across diverse mathematical structures, this section establishes its practical verifiability and structural persistence. We first prove a local rank--$2$ sufficient condition, showing that when products of involution generators have odd order, the required square roots are automatically trapped within the dihedral subgroups they generate. We then delineate the boundaries of structural transport by proving that $(\mathrm{SR}_2)$ descends cleanly to quotient groups, while recording a fundamental obstruction to stability under direct products, a phenomenon directly governed by the abelianization constraints of Theorem~\ref{thm:ab-C2}.

\subsection{A local sufficient condition}
If the square root in Definition~\ref{def:SR2} can be chosen inside the subgroup generated by the pair of involutions, then the condition reduces to an elementary congruence.

\begin{definition}[(SR$_2^{\mathrm{loc}}$)]\label{def:SR2loc}
We say $G$ satisfies $(\mathrm{SR}_2^{\mathrm{loc}}(\mathcal I))$ if it is generated by a set
$\mathcal I$ of elements satisfying $i^2=e$ and for all $a,b\in\mathcal I$ there exists
$t\in\langle a,b\rangle$ such that $t^2=ab$.
\end{definition}

\begin{lemma}[Two involutions and dihedral rank--$2$]\label{lem:dihedral-rank2}
Let $a,b$ be involutions in a group $G$, and set
\[
m:=\mathrm{ord}(ab)\in\{1,2,3,\dots,\infty\}.
\]
Then $ab$ is a square in $\langle a,b\rangle$ if and only if $m$ is finite and odd. If, in addition, $a$ and $b$ are distinct nonidentity involutions, then $\langle a,b\rangle$ is dihedral with rotation subgroup $\langle ab\rangle$; it has order $2m$ if $m<\infty$, and it is infinite dihedral if $m=\infty$.
\end{lemma}

\begin{proof}
We first dispose of the degenerate cases allowed by the convention that the identity may be included among involutions. If $a=b$, then $ab=e$, so $m=1$ and $ab=e^2$ is a square. If one of $a,b$ is $e$ and the other is a nonidentity involution, then $ab$ is the nonidentity element of the cyclic group $\langle ab\rangle\cong C_2$. The only square in this cyclic group is $e$, so $ab$ is not a square; in this case $m=2$, which is finite but not odd.

It remains to assume that $a$ and $b$ are distinct nonidentity involutions. Then the subgroup generated by $a$ and $b$ is a quotient of $C_2*C_2\cong D_\infty$ and is dihedral; its rotation subgroup is $\langle ab\rangle$ of order $m$. In such a dihedral group, every element outside the rotation subgroup is a reflection and has square $e$. If $ab\ne e$, any $t\in\langle a,b\rangle$ satisfying $t^2=ab$ must therefore lie in the rotation subgroup. Thus $t=(ab)^k$ for some integer $k$, and $t^2=(ab)^{2k}=ab$. If $m=\infty$, this would imply $2k=1$ in $\mathbb Z$, which is impossible. If $m<\infty$, the condition is equivalent to the congruence $2k\equiv 1\pmod m$, which has a solution if and only if $m$ is odd. Conversely, if $m$ is finite and odd, choose $k$ with $2k\equiv 1\pmod m$ and take $t=(ab)^k$.
\end{proof}

\begin{corollary}[Odd rank--$2$ implies $(\mathrm{SR}_2)$]\label{cor:odd-rank2}
If $G$ is generated by involutions $\mathcal I$ such that for all $a,b\in\mathcal I$ the element $ab$ has finite odd order, then $(\mathrm{SR}_2^{\mathrm{loc}}(\mathcal I))$ holds, and therefore $(\mathrm{SR}_2)$ holds.
\end{corollary}

\begin{remark}
For $n\ge 4$, the symmetric group $S_n$ with $\mathcal I$ equal to the set of all transpositions satisfies $(\mathrm{SR}_2)$ but not $(\mathrm{SR}_2^{\mathrm{loc}})$. Indeed, two disjoint transpositions generate a Klein four group, where no nontrivial element is a square. The required square root nevertheless exists globally in $S_n$, for instance via a $4$-cycle.
\end{remark}

\subsection{Stability properties}

\begin{remark}[Direct products need not preserve $(\mathrm{SR}_2)$]\label{rem:product-fails}
The square-root criterion $(\mathrm{SR}_2)$ is not stable under direct products. For instance, $S_3$ satisfies $(\mathrm{SR}_2)$ with $\mathcal I$ the set of all transpositions; see Theorem~\ref{thm:Sn}. However,
\[
(S_3\times S_3)_{\mathrm{ab}}\cong (S_3)_{\mathrm{ab}}\times (S_3)_{\mathrm{ab}}
\cong C_2\times C_2.
\]
By Theorem~\ref{thm:ab-C2}, any group satisfying $(\mathrm{SR}_2(\mathcal I))$ has abelianization isomorphic to $1$ or $C_2$. Hence $S_3\times S_3$ cannot satisfy $(\mathrm{SR}_2(\mathcal J))$ for any involution generating set $\mathcal J$.
\end{remark}

\begin{proposition}[Passing to quotients]\label{prop:quotient}
Let $G$ satisfy $(\mathrm{SR}_2(\mathcal I))$ and let $N\lhd G$ be normal. Set
\[
\overline{\mathcal I}:=\{\,iN:\ i\in\mathcal I\,\}\subseteq G/N.
\]
Then $G/N$ satisfies $(\mathrm{SR}_2(\overline{\mathcal I}))$. Equivalently, every element of $\overline{\mathcal I}$ squares to the identity in $G/N$, and for every $aN,bN\in\overline{\mathcal I}$ there exists $tN\in G/N$ such that
\[
(tN)^2=(aN)(bN).
\]
\end{proposition}

\begin{proof}
Since $G=\langle \mathcal I\rangle$, the quotient $G/N$ is generated by
\[
\overline{\mathcal I}=\{\,iN:\ i\in\mathcal I\,\}.
\]
For each $i\in\mathcal I$ we have
\[
(iN)^2=i^2N=N,
\]
so every element of $\overline{\mathcal I}$ has order dividing $2$ in $G/N$.

Now let $aN,bN\in\overline{\mathcal I}$, where $a,b\in\mathcal I$. Since $G$ satisfies $(\mathrm{SR}_2(\mathcal I))$, there exists $t\in G$ such that
\[
t^2=ab.
\]
Passing to the quotient gives
\[
(tN)^2=(ab)N=(aN)(bN).
\]
Thus the square-root property descends to $G/N$.
\end{proof}

\section{The obstruction space and defect quantification}\label{sec:obstruction}

While Section~\ref{sec:structure} explores the conditions under which the square-root criterion is preserved, this section builds an intrinsic algebraic machinery to measure precisely what occurs when $(\mathrm{SR}_2)$ fails. We quantify the exact gap between normalized Jensen solutions and true homomorphisms without presupposing any square-root hypothesis on the domain. The key point is that the Jensen equivalence relation is exactly the relation modulo the subgroup generated by all squares. This identification makes the obstruction quotient transparent and allows us to compute the exact defect rank for even dihedral structures.

\begin{definition}[The Jensen equivalence relation]\label{def:jensen-equivalence}
Let $G$ be a group. The \emph{Jensen equivalence relation} $\equiv_J$ on $G$ is the smallest equivalence relation such that
\[
xy\equiv_J xy^{-1}\qquad(\forall x,y\in G).
\]
We write
\[
Q_J(G):=G/{\equiv_J}
\]
for the quotient set and $[g]_J$ for the $\equiv_J$-class of $g$.
\end{definition}

\begin{lemma}[Jensen classes and the square quotient]\label{lem:QJ-square-quotient}
Let
\[
G^{(2)}:=\langle g^2:g\in G\rangle.
\]
Then $G^{(2)}$ is a normal subgroup of $G$, and the Jensen equivalence classes are precisely the cosets of $G^{(2)}$. Consequently,
\[
Q_J(G)\cong G/G^{(2)}
\]
canonically as an elementary abelian $2$-group.
\end{lemma}

\begin{proof}
Since $G^{(2)}$ is generated by all squares, it is invariant under every inner automorphism and hence is normal. For all $x,y\in G$, the elements $xy$ and $xy^{-1}$ have the same image in $G/G^{(2)}$, because
\[
(xy)^{-1}(xy^{-1})=y^{-1}x^{-1}xy^{-1}=y^{-2}\in G^{(2)}.
\]
Thus each Jensen equivalence class is contained in one coset of $G^{(2)}$.

Conversely, for every $g,y\in G$, taking $x=gy$ gives
\[
xy=gy^2,
\qquad
xy^{-1}=g,
\]
so $gy^2\equiv_J g$. Hence right multiplication by any square does not change the Jensen class. Since the inverse of a square is again a square and $G^{(2)}$ is generated by such elements, right multiplication by any element of $G^{(2)}$ does not change the Jensen class. Therefore two elements are Jensen equivalent if and only if they lie in the same coset of $G^{(2)}$.

Finally, every square is trivial in $G/G^{(2)}$, so this quotient has exponent dividing $2$. A group of exponent dividing $2$ is abelian, since $(uv)^2=e$ implies $uv=(uv)^{-1}=v^{-1}u^{-1}=vu$. Thus $G/G^{(2)}$ is an elementary abelian $2$-group.
\end{proof}

\begin{theorem}[Exact obstruction space for involution-generated groups]\label{thm:obstruction}
Let $G$ be generated by involutions and let $H$ be an abelian group. Then evaluation on Jensen equivalence classes induces an isomorphism of $\mathbb F_2$-vector spaces
\[
S_1(G,H)\cong \mathrm{Map}_0(Q_J(G),H[2]),
\]
where
\[
\mathrm{Map}_0(Q_J(G),H[2])
:=\{\Phi:Q_J(G)\to H[2]:\Phi([e]_J)=0\}.
\]
Consequently,
\[
\mathcal O(G,H):=S_1(G,H)/\mathrm{Hom}(G,H)
\]
is canonically isomorphic to
\[
\mathrm{Map}_0(Q_J(G),H[2])\big/\mathrm{Hom}(G,H),
\]
where a homomorphism $G\to H$ is regarded as a function on $Q_J(G)$ through the preceding isomorphism. If $Q_J(G)$ is finite and $H[2]$ is finite-dimensional over $\mathbb F_2$, then
\[
\dim_{\mathbb F_2}\mathcal O(G,H)
=(|Q_J(G)|-1)\dim_{\mathbb F_2}H[2]
-\dim_{\mathbb F_2}\mathrm{Hom}(G,H).
\]
Equivalently, if
\[
q:=\dim_{\mathbb F_2}G/G^{(2)}
\qquad\text{and}\qquad
 d:=\dim_{\mathbb F_2}H[2]
\]
are finite, then
\[
\dim_{\mathbb F_2}\mathcal O(G,H)=(2^q-1-q)d.
\]
\end{theorem}

\begin{proof}
Let $f\in S_1(G,H)$. By Proposition~\ref{prop:toolbox}(c), we have $2f\equiv 0$, so $f$ takes values in $H[2]$. The Jensen identity \eqref{eq:J1} becomes
\[
f(xy)+f(xy^{-1})=0.
\]
Since both terms lie in $H[2]$, this is equivalent to
\[
f(xy)=f(xy^{-1})\qquad(\forall x,y\in G).
\]
Therefore $f$ is constant on the equivalence relation generated by all pairs $xy,xy^{-1}$, namely $\equiv_J$. Since $f(e)=0$, the induced function $\Phi_f:Q_J(G)\to H[2]$ satisfies $\Phi_f([e]_J)=0$.

Conversely, let $\Phi\in\mathrm{Map}_0(Q_J(G),H[2])$ and define $f(g):=\Phi([g]_J)$. Then $f(e)=0$, all values of $f$ lie in $H[2]$, and $f(xy)=f(xy^{-1})$ for every $x,y\in G$ by the defining property of $\equiv_J$. Hence
\[
f(xy)+f(xy^{-1})=2f(xy)=0=2f(x),
\]
so $f\in S_1(G,H)$. These two constructions are inverse to each other and respect pointwise addition, giving the asserted isomorphism.

Every homomorphism $G\to H$ belongs to $S_1(G,H)$ by Remark~\ref{rem:hom-in-S12}. Because $G$ is generated by involutions, every such homomorphism takes values in $H[2]$, and the preceding isomorphism embeds $\mathrm{Hom}(G,H)$ into $\mathrm{Map}_0(Q_J(G),H[2])$. Taking the quotient gives the displayed description of $\mathcal O(G,H)$. The first dimension formula follows immediately when the two vector spaces involved are finite-dimensional.

By Lemma~\ref{lem:QJ-square-quotient}, one has $Q_J(G)\cong G/G^{(2)}$. If $q=\dim_{\mathbb F_2}G/G^{(2)}$ and $d=\dim_{\mathbb F_2}H[2]$, then $|Q_J(G)|=2^q$. Moreover, since every homomorphism from $G$ to $H$ annihilates $G^{(2)}$ and has image in $H[2]$, restriction to the square quotient gives
\[
\mathrm{Hom}(G,H)\cong\mathrm{Hom}_{\mathbb F_2}(G/G^{(2)},H[2]).
\]
Hence
\[
\dim_{\mathbb F_2}\mathrm{Hom}(G,H)=qd,
\]
and the sharpened formula follows.
\end{proof}

\begin{proposition}[Exact even-dihedral obstruction]\label{prop:even-dihedral-obstruction}
Let $m=2k$ be even and let
\[
D_m=\langle r,s\mid r^m=e,\ s^2=e,\ srs=r^{-1}\rangle.
\]
Then the Jensen equivalence classes are
\[
\langle r^2\rangle,
\qquad r\langle r^2\rangle,
\qquad s\langle r^2\rangle,
\qquad sr\langle r^2\rangle.
\]
Consequently every $f\in S_1(D_m,H)$ is determined uniquely by three elements $u,c_0,c_1\in H[2]$ through
\[
f(r^{2t})=0,
\qquad
f(r^{2t+1})=u,
\qquad
f(sr^{2t})=c_0,
\qquad
f(sr^{2t+1})=c_1.
\]
Moreover,
\[
\mathrm{Hom}(D_m,H)=\{(u,c_0,c_1)\in H[2]^3:c_1=c_0+u\},
\]
and therefore
\[
\mathcal O(D_m,H)\cong H[2].
\]
\end{proposition}

\begin{proof}
Let
\[
\alpha:D_m\to C_2\times C_2
\]
be the homomorphism defined by $\alpha(r)=(1,0)$ and $\alpha(s)=(0,1)$. This homomorphism is well-defined because $m$ is even and $C_2\times C_2$ has exponent $2$. For every $x,y\in D_m$ we have $\alpha(y^{-1})=\alpha(y)$, and hence
\[
\alpha(xy)=\alpha(x)+\alpha(y)=\alpha(x)+\alpha(y^{-1})=\alpha(xy^{-1}).
\]
Thus each Jensen equivalence class is contained in a fibre of $\alpha$. These four fibres are exactly
\[
\langle r^2\rangle,
\qquad r\langle r^2\rangle,
\qquad s\langle r^2\rangle,
\qquad sr\langle r^2\rangle.
\]
It remains to prove that each fibre is contained in one Jensen equivalence class. For rotations, the defining relation with $x=r^a$ and $y=r^b$ gives
\[
r^{a+b}\equiv_J r^{a-b}.
\]
Taking $a=b$ shows $r^{2b}\equiv_J e$ for all $b$, so all even rotations lie in $[e]_J$. The same relation changes the exponent of a rotation by an even integer, so all odd rotations are equivalent to one another.

For reflections, use $x=sr^a$ and $y=r^b$. Then
\[
xy=sr^{a+b},
\qquad
xy^{-1}=sr^{a-b},
\]
and therefore
\[
sr^{a+b}\equiv_J sr^{a-b}.
\]
Again the exponent changes by an even integer. Hence all even-indexed reflections are equivalent to one another, and all odd-indexed reflections are equivalent to one another. This proves the asserted list of Jensen equivalence classes.

The description of $S_1(D_m,H)$ now follows from Theorem~\ref{thm:obstruction}. Since the class of $e$ is $\langle r^2\rangle$, its value is forced to be $0$, while the remaining three classes may be assigned arbitrary elements of $H[2]$.

A homomorphism $D_m\to H$ is determined by the two values $f(r)=u$ and $f(s)=c_0$. Because $r$ and $s$ are products of involutions in $D_m$, these values lie in $H[2]$. Conversely, any $u,c_0\in H[2]$ define a homomorphism: the relation $r^m=e$ is respected since $m$ is even, $s^2=e$ is respected since $2c_0=0$, and the relation $srs=r^{-1}$ is respected because in additive notation it becomes
\[
c_0+u+c_0=-u,
\]
which is equivalent to $2c_0+2u=0$. For such a homomorphism,
\[
f(sr)=f(s)+f(r)=c_0+u.
\]
Thus homomorphisms are precisely the triples with $c_1=c_0+u$. The quotient
\[
H[2]^3\big/\{(u,c_0,c_0+u):u,c_0\in H[2]\}
\]
is naturally isomorphic to $H[2]$, for example by the invariant
\[
(u,c_0,c_1)\longmapsto c_1+c_0+u.
\]
This proves $\mathcal O(D_m,H)\cong H[2]$.
\end{proof}

\section{Applications and concrete families}\label{sec:apps}

We now apply our general rigidity criterion and obstruction theory to concrete families of groups. The presentation is organized as a structural progression from discrete combinatorics to continuous geometry. We begin by isolating the square-root mechanism in symmetric groups and odd-rank Coxeter systems. We then establish a strong rigidity phenomenon for semidirect products, contrast it with the exact one-copy $H[2]$ defect found in even dihedral groups, and culminate by proving that our purely algebraic framework governs continuous Lie geometry on the orthogonal group $O(n)$ without requiring any analytical regularity.

\subsection{Symmetric groups}
We recover the known solution structure on $S_n$ in a way that isolates the square-root mechanism.

\begin{theorem}[Symmetric group]\label{thm:Sn}
Let $n\ge 2$ and let $\mathcal I$ be the set of all transpositions in $S_n$. Then $(\mathrm{SR}_2(\mathcal I))$ holds. Consequently,
\[
S_1(S_n,H)=S_{1,2}(S_n,H)=\mathrm{Hom}(S_n,H).
\]
Moreover, every $f\in S_1(S_n,H)$ is determined by a choice of $u\in H[2]$ and satisfies
\[
f(\sigma)=\begin{cases}
0,&\sigma\ \text{even},\\
u,&\sigma\ \text{odd}.
\end{cases}
\]
\end{theorem}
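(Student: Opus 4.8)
The plan is to verify the two parts of $(\mathrm{SR}_2)$ for $S_n$ with $\mathcal{I}$ the set of transpositions, and then invoke Theorem~\ref{thm:hom} together with Corollary~\ref{cor:parity} to read off both the solution-space identity and the explicit parity form. The first part, that $S_n = \langle \mathcal{I}\rangle$, is the standard fact that transpositions generate the symmetric group, so I would state it in one line. The substance lies entirely in part (2): showing that for any two transpositions $a,b$, the product $ab$ is a square in $S_n$.

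First I would reduce the square-root verification to a small case analysis on the cycle type of $ab$. For two transpositions $a,b$ there are exactly three possibilities: either $a=b$, in which case $ab = e = e^2$ is trivially a square; or $a,b$ are disjoint, say $a=(pq)$ and $b=(rs)$ with $\{p,q\}\cap\{r,s\}=\emptyset$, in which case $ab$ is a product of two disjoint $2$-cycles; or $a,b$ share exactly one point, say $a=(pq)$ and $b=(qr)$, in which case $ab$ is a $3$-cycle. In each case the goal is to exhibit an explicit $t\in S_n$ with $t^2=ab$. For the $3$-cycle case this is immediate and clean: a $3$-cycle is always the square of a $3$-cycle, since $(p\,r\,q)^2 = (p\,q\,r)$, so I would just write down the square root directly.

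The step I expect to require the most care is the disjoint case, where $ab = (pq)(rs)$ is a product of two disjoint transpositions, an element of order $2$. This is \emph{not} the square of a $2$-cycle, so I must produce a square root of a different form. The natural candidate is the $4$-cycle $t=(p\,r\,q\,s)$, whose square is $t^2 = (p\,q)(r\,s) = ab$; I would verify this by direct computation on the four points. The one subtlety worth flagging is that this construction uses the four distinct points $p,q,r,s$ and hence lives inside $S_n$ for $n\ge 4$, but since these points already appear in $a$ and $b$ the $4$-cycle is a genuine element of $S_n$ whenever $a,b$ are disjoint transpositions, so no extra hypothesis on $n$ is needed beyond what the disjointness already forces. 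Thus $(\mathrm{SR}_2)$ holds in all cases.

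Having established $(\mathrm{SR}_2)$, the conclusion follows mechanically. Theorem~\ref{thm:hom} gives $S_1(S_n,H) = S_{1,2}(S_n,H) = \mathrm{Hom}(S_n,H) \cong \mathrm{Hom}((S_n)_{\mathrm{ab}},H)$, and Corollary~\ref{cor:parity} supplies the parity normal form: there is a common value $u\in H[2]$ with $f(i)=u$ on every transposition, and $f(\sigma)=r\cdot u$ where $r$ is the number of transpositions in any factorization of $\sigma$, which is $0$ for even $\sigma$ and $u$ for odd $\sigma$. The only remaining remark is that this is well-defined: the parity of $r$ is an invariant of $\sigma$ (the sign homomorphism), and $u\in H[2]$ means $r\cdot u$ depends only on $r \bmod 2$, so the stated formula is consistent regardless of which factorization is chosen.
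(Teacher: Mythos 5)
Your proposal is correct and follows essentially the same route as the paper: the identical three-case analysis on the pair of transpositions (equal, overlapping in one point, disjoint), the same explicit square roots (the identity, an inverse $3$-cycle, and an interleaved $4$-cycle), and the same final appeal to Theorem~\ref{thm:hom} and Corollary~\ref{cor:parity}. Your added remark on the well-definedness of the parity formula is a small but welcome point of care that the paper leaves implicit.
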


\begin{proof}
Let $\tau_1,\tau_2$ be transpositions. If $\tau_1=\tau_2$, then $\tau_1\tau_2=e$ is a square. If they share one element, $\tau_1\tau_2$ is a $3$-cycle, hence a square of its inverse. If they are disjoint, say $\tau_1=(ab)$ and $\tau_2=(cd)$ with $a,b,c,d$ distinct, then
\[
(acbd)^2=(ab)(cd)=\tau_1\tau_2.
\]
Thus $(\mathrm{SR}_2)$ holds, and Theorem~\ref{thm:hom} applies. The parity form follows from Corollary~\ref{cor:parity}.
\end{proof}

\subsection{Generalized dihedral groups and semidirect-product rigidity}

The next result gives a precise characterization for generalized dihedral groups, thereby addressing the scope of $(\mathrm{SR}_2)$ in a broad family beyond $S_n$ and cyclic dihedral groups.

\begin{definition}[Generalized dihedral group]
Let $A$ be an abelian group. The \emph{generalized dihedral group} of $A$ is
\[
\mathrm{Dih}(A):=A\rtimes \langle s\rangle,
\]
where $s^2=e$ and $sas=a^{-1}$ for all $a\in A$.
The coset $sA=\{sa:a\in A\}$ consists of involutions, called \emph{reflections}.
\end{definition}

\begin{theorem}[Generalized dihedral groups and surjective squaring]\label{thm:gen-dih}
Let $A$ be an abelian group, let
\[
G=\mathrm{Dih}(A)=A\rtimes \langle s\rangle,
\]
and let $\mathcal I=sA$ be the set of reflections. Then $G$ satisfies $(\mathrm{SR}_2(\mathcal I))$ if and only if the square map
\[
A\to A,
\qquad a\mapsto a^2,
\]
is surjective. Consequently, if the square map on $A$ is surjective, for instance if $A$ is finite of odd order or $A$ is divisible, then for every abelian $H$,
\[
S_1(G,H)=S_{1,2}(G,H)=\mathrm{Hom}(G,H).
\]
\end{theorem}

\begin{proof}
First assume that the square map on $A$ is surjective. The set $\mathcal I=sA$ generates $G$ because $s\in sA$ and, for each $a\in A$, one has $a=s(sa)$.

For $sa,sb\in sA$, we have
\[
(sa)(sb)=s a s b=(sas)b=a^{-1}b\in A.
\]
By surjectivity of squaring on $A$, choose $u\in A$ with
\[
u^2=a^{-1}b=(sa)(sb).
\]
Thus $(\mathrm{SR}_2(\mathcal I))$ holds.

Conversely, assume that $G$ satisfies $(\mathrm{SR}_2(\mathcal I))$. Let $c\in A$ be arbitrary. Then
\[
(s)(sc)=c.
\]
Since $s,sc\in \mathcal I$, the condition $(\mathrm{SR}_2(\mathcal I))$ yields some $t\in G$ such that
\[
t^2=c.
\]
If $c\ne e$ and $t\in sA$, then $t$ is a reflection, hence $t^2=e$, a contradiction. Therefore, for every $c\in A\setminus\{e\}$, any square root $t$ of $c$ must lie in $A$, and thus $c=t^2$ is a square in $A$. Since also $e=e^2$, the square map on $A$ is surjective.

The final conclusion follows from Theorem~\ref{thm:hom}.
\end{proof}

\begin{theorem}[Rigidity for $A\rtimes_\sigma C_2$]\label{thm:semidirect-rigidity}
Let $A$ be an abelian group, let $\sigma\in\mathrm{Aut}(A)$ satisfy $\sigma^2=\mathrm{id}_A$, and set
\[
G=A\rtimes_\sigma\langle s\rangle,
\qquad s^2=e,
\qquad sas=\sigma(a)\quad(a\in A).
\]
Suppose there exists a set $\mathcal I$ of involutions in $G$ such that $G$ satisfies $(\mathrm{SR}_2(\mathcal I))$. Then
\[
\sigma(a)=a^{-1}\qquad(\forall a\in A),
\]
and the square map $A\to A$, $a\mapsto a^2$, is surjective. Conversely, if $\sigma(a)=a^{-1}$ for all $a\in A$ and the square map on $A$ is surjective, then $G$ satisfies $(\mathrm{SR}_2(sA))$.
\end{theorem}

\begin{proof}
Let $\eta:G\to C_2$ be the quotient homomorphism with kernel $A$. Since $\eta(t^2)=0$ for every $t\in G$, every square in $G$ lies in $A$.

Because $\mathcal I$ generates $G$, it contains at least one element outside $A$. Suppose that $i\in\mathcal I\cap A$ and $j\in\mathcal I\setminus A$. Then $ij\notin A$, but $(\mathrm{SR}_2(\mathcal I))$ requires $ij$ to be a square in $G$, contradicting the preceding paragraph. Hence $\mathcal I\cap A=\varnothing$, and therefore $\mathcal I\subseteq sA$.

For $a\in A$, the element $sa$ is an involution if and only if
\[
(sa)^2=sasa=(sas)a=\sigma(a)a=e.
\]
Define the norm map
\[
N:A\to A,
\qquad N(a):=a\sigma(a),
\]
and set $K:=\ker N$. The preceding computation shows that every element of $\mathcal I$ lies in $sK$. If $a,b\in K$, then
\[
(sa)(sb)=\sigma(a)b=a^{-1}b\in K.
\]
It follows that the subgroup generated by $\mathcal I$ is contained in $K\rtimes\langle s\rangle$. Since $\langle\mathcal I\rangle=G$, we must have $A=K$. Thus $a\sigma(a)=e$ for every $a\in A$, which is exactly $\sigma(a)=a^{-1}$.

It remains to prove that the square map on $A$ is surjective. We now know that $G=\mathrm{Dih}(A)$. Let $c\in A$. Since $\mathcal I\subseteq sA$ and $\mathcal I$ generates $G$, the element $c$ can be written as a word of even length in elements of $\mathcal I$:
\[
c=j_1j_2\cdots j_{2r}.
\]
For each $\ell$, the element $j_{2\ell-1}j_{2\ell}$ is a square in $G$ by $(\mathrm{SR}_2(\mathcal I))$. In a generalized dihedral group, every square is either a square from $A$ or the identity, because reflections square to $e$. Hence each $j_{2\ell-1}j_{2\ell}$ belongs to the subgroup $A^2:=\{a^2:a\in A\}$ of $A$. Since $A$ is abelian, $A^2$ is a subgroup, and therefore
\[
c=(j_1j_2)\cdots(j_{2r-1}j_{2r})\in A^2.
\]
Thus the square map on $A$ is surjective.

The converse is precisely Theorem~\ref{thm:gen-dih}.
\end{proof}

\begin{remark}\label{rem:norm-delta}
The proof explains the role of the norm map in arbitrary semidirect products. The involutions in the coset $sA$ are exactly $sK$, where $K=\ker N$ and $N(a)=a\sigma(a)$. The difference map
\[
\delta:A\to A,
\qquad \delta(a):=a^{-1}\sigma(a),
\]
satisfies $\mathrm{im}(\delta)\subseteq K$. If $\delta$ is surjective onto $A$, then $K=A$, so $\sigma(a)=a^{-1}$ for every $a\in A$. In that case $\delta(a)=a^{-2}$, and surjectivity of $\delta$ is equivalent to surjectivity of the square map on $A$. Thus the norm and difference maps do not produce genuinely new arbitrary-$\sigma$ cases for $(\mathrm{SR}_2)$; they force the generalized dihedral situation.
\end{remark}

\begin{corollary}[Odd dihedral groups]\label{cor:odd-dihedral}
For $m$ odd, the classical dihedral group $D_m$ satisfies $(\mathrm{SR}_2)$ with reflections as involutions. Hence
\[
S_1(D_m,H)=S_{1,2}(D_m,H)=\mathrm{Hom}(D_m,H)
\]
for all abelian $H$.
\end{corollary}

\begin{proof}
This is Theorem~\ref{thm:gen-dih} with $A=C_m$, since the square map on $C_m$ is surjective exactly when $m$ is odd.
\end{proof}

\subsection{Even dihedral groups and a failure mechanism}

When $(\mathrm{SR}_2)$ fails, the homomorphism conclusion need not hold. The exact computation in Proposition~\ref{prop:even-dihedral-obstruction} shows that even dihedral groups have precisely one independent obstruction copy of $H[2]$.

\begin{theorem}[Dihedral dichotomy]\label{thm:dihedral}
Let
\[
D_m=\langle r,s\mid r^m=e,
\ s^2=e,
\ srs=r^{-1}\rangle
\]
and
\[
\mathcal I=\{sr^k:0\le k<m\}.
\]
\begin{enumerate}
\item[(i)] If $m$ is odd, then $D_m$ satisfies $(\mathrm{SR}_2(\mathcal I))$.
\item[(ii)] If $m$ is even, then $(\mathrm{SR}_2(\mathcal I))$ fails. Moreover, if $H$ is an abelian group with $H[2]\ne 0$, then there exists a normalized solution of \eqref{eq:J1} with values in $H$ that is not a group homomorphism.
\end{enumerate}
\end{theorem}

\begin{proof}
(i) This is Corollary~\ref{cor:odd-dihedral}.

(ii) Assume that $m$ is even. Then
\[
(s)(sr)=r.
\]
On the other hand, the squares in $D_m$ are exactly the even powers of $r$ together with $e$: indeed,
\[
(r^\ell)^2=r^{2\ell}
\qquad\text{and}\qquad
(sr^k)^2=e.
\]
Hence $r$ is not a square in $D_m$, so $(\mathrm{SR}_2(\mathcal I))$ fails.

Now assume $H[2]\ne 0$ and choose $u\in H[2]$ with $u\ne 0$. Proposition~\ref{prop:even-dihedral-obstruction} gives a normalized Jensen solution by taking
\[
f(r^{2t})=0,
\qquad
f(r^{2t+1})=u,
\qquad
f(sr^{2t})=0,
\qquad
f(sr^{2t+1})=0.
\]
It is not a homomorphism, since
\[
f(sr)=0
\qquad\text{but}\qquad
f(s)+f(r)=u\ne 0.
\]
\end{proof}

\subsection{Coxeter-type sufficient conditions}

The rank--$2$ lemma provides a convenient criterion in Coxeter-type settings where products of generators have odd order.

\begin{example}[Odd rank--$2$ Coxeter systems]\label{ex:coxeter-odd}
Let $(W,S)$ be a Coxeter system in the standard sense of \cite{BjornerBrenti2005,Davis2008}, so $S$ is a set of involutions and $W=\langle S\rangle$ with relations $(st)^{m_{st}}=e$. If $m_{st}$ is finite and odd for all $s\ne t$, then Corollary~\ref{cor:odd-rank2} yields $(\mathrm{SR}_2^{\mathrm{loc}}(S))$, hence every normalized Jensen solution on $W$ is a homomorphism.
\end{example}

\begin{remark}
Example~\ref{ex:coxeter-odd} is complementary to the symmetric-group application: the standard Coxeter generators of $S_n$ do not satisfy the oddness hypothesis because many $m_{st}=2$. The point is that $(\mathrm{SR}_2)$ depends on the chosen involution generating set, and $S_n$ admits a different involution generating set, namely all transpositions, for which $(\mathrm{SR}_2)$ holds.
\end{remark}

\subsection{Continuous Lie geometry: Orthogonal and unitary groups}

The final application is continuous rather than discrete. No continuity is imposed on $f$; the conclusion is purely algebraic and follows from the same square-root mechanism applied to the abstract group $O(n)$. The reflection-generation input is the Euclidean content of the Cartan--Dieudonn\'e theorem; see \cite{Gallier2011} for a standard account.

\begin{definition}[Hyperplane reflection]
Let $V$ be a finite-dimensional real inner product space. For a unit vector $u\in V$, define
\[
\tau_u(x):=x-2\langle x,u\rangle u\qquad(x\in V).
\]
Then $\tau_u\in O(V)$ is the reflection fixing the hyperplane $u^\perp$ pointwise and sending $u$ to $-u$.
\end{definition}

\begin{theorem}[Orthogonal groups]\label{thm:On}
Let $V$ be a real inner product space of dimension $n\ge 2$, and let
\[
\mathcal R:=\{\tau_u:u\in V,\ \|u\|=1\}\subseteq O(V)
\]
be the set of all hyperplane reflections. Then $O(V)$ satisfies $(\mathrm{SR}_2(\mathcal R))$. Consequently, for every abelian group $H$,
\[
S_1(O(V),H)=S_{1,2}(O(V),H)=\mathrm{Hom}(O(V),H)\cong H[2].
\]
Under this identification, a solution is determined by an element $h\in H[2]$ and is given by
\[
f(T)=
\begin{cases}
0,& \det T=1,\\
h,& \det T=-1.
\end{cases}
\]
\end{theorem}

\begin{proof}
First we prove that $\mathcal R$ generates $O(V)$, following the standard Cartan--Dieudonn\'e induction \cite{Gallier2011}. We argue by induction on $n=\dim V$. The statement is immediate for $n=1$. Let $n\ge 2$ and let $T\in O(V)$. If $T=I$, there is nothing to prove. Otherwise choose a unit vector $v$ such that $Tv\ne v$. There is a hyperplane reflection $\rho$ with $\rho(Tv)=v$: explicitly, one may take $\rho$ to be the reflection whose normal vector is proportional to $Tv-v$. Then $\rho T$ fixes $v$ and hence preserves $v^\perp$. By induction, the restriction of $\rho T$ to $v^\perp$ is a product of hyperplane reflections in $v^\perp$. Extending each of these reflections by the identity on the line $\mathbb R v$ gives hyperplane reflections of $V$. Thus $\rho T$ is a product of elements of $\mathcal R$, and therefore $T$ is also a product of elements of $\mathcal R$.

It remains to prove the square-root condition. Let $\tau_u,\tau_v\in\mathcal R$. If $\tau_u=\tau_v$, then $\tau_u\tau_v=I$, which is a square. Assume $\tau_u\ne\tau_v$. Then $u$ and $v$ are linearly independent up to sign, and the plane
\[
P:=\mathrm{span}\{u,v\}
\]
has dimension $2$. Both $\tau_u$ and $\tau_v$ preserve $P$ and act as the identity on $P^\perp$. On the oriented Euclidean plane $P$, each restriction is a line reflection, and the product of two line reflections is a rotation. Hence $(\tau_u\tau_v)|_P$ is a planar rotation. Every planar rotation has a square root, obtained by halving the angle. Let $R\in SO(P)$ satisfy
\[
R^2=(\tau_u\tau_v)|_P.
\]
Define $\widetilde R\in O(V)$ by letting $\widetilde R$ act as $R$ on $P$ and as the identity on $P^\perp$. Then
\[
\widetilde R^2=\tau_u\tau_v.
\]
Thus $(\mathrm{SR}_2(\mathcal R))$ holds.

By Theorem~\ref{thm:hom}, every normalized solution in $S_1(O(V),H)$ is a homomorphism. It remains only to identify the homomorphism group. The determinant gives a surjective homomorphism
\[
\det:O(V)\to\{\pm 1\}\cong C_2
\]
with kernel $SO(V)$. We claim that $[O(V),O(V)]=SO(V)$. Since the determinant is abelian, every commutator has determinant $1$, so $[O(V),O(V)]\subseteq SO(V)$.

Conversely, $SO(V)$ is generated by planar rotations. We prove this by induction on $n$. Fix an orthonormal basis $e_1,\dots,e_n$ and let $Q\in SO(V)$. If $Qe_1=e_1$, then $Q$ preserves $e_1^\perp$ and the induction hypothesis applies to the restriction of $Q$ to $e_1^\perp$. If $Qe_1\ne e_1$, choose a planar rotation $R$ sending $Qe_1$ to $e_1$. When $Qe_1=-e_1$, choose a unit vector $e_2\perp e_1$ and take $R$ to be the rotation by $\pi$ in $\mathrm{span}\{e_1,e_2\}$; this sends $Qe_1$ to $e_1$. In all remaining cases, take $R$ in the plane $\mathrm{span}\{Qe_1,e_1\}$ and let it be the identity on the orthogonal complement. Then $RQ$ fixes $e_1$, preserves $e_1^\perp$, and has determinant $1$ on $e_1^\perp$. Hence $RQ$ is a product of planar rotations by induction, and so is $Q$.

Now let $R_\theta$ be a rotation by angle $\theta$ in a two-dimensional plane $P$, extended by the identity on $P^\perp$. Choose a hyperplane reflection $\tau$ whose restriction to $P$ is a line reflection and whose fixed hyperplane contains $P^\perp$. Then
\[
\tau R_\phi\tau=R_{-\phi}
\]
for every angle $\phi$, and hence
\[
[\tau,R_\phi]=\tau R_\phi\tau^{-1}R_\phi^{-1}=R_{-2\phi}.
\]
Choosing $\phi=-\theta/2$ shows that $R_\theta$ is a commutator in $O(V)$. Thus every planar rotation lies in $[O(V),O(V)]$, and so $SO(V)\subseteq[O(V),O(V)]$. Therefore
\[
O(V)_{\mathrm{ab}}\cong C_2
\]
through the determinant map. Consequently
\[
\mathrm{Hom}(O(V),H)\cong\mathrm{Hom}(C_2,H)\cong H[2],
\]
and the displayed determinant formula follows.
\end{proof}

\begin{remark}[Why $U(n)$ does not fit the involution criterion]\label{rem:unitary-obstruction}
The analogous statement for $U(n)$ cannot be formulated with involutions generating all of $U(n)$. If $U\in U(n)$ and $U^2=I$, then $U$ is diagonalizable with eigenvalues only $\pm1$, and therefore $\det U\in\{\pm1\}$. Hence every finite product of involutions in $U(n)$ has determinant $\pm1$. Since $U(n)$ contains matrices with arbitrary determinant on the unit circle, $U(n)$ is not generated by its involutions. In particular, $U(n)$ cannot satisfy $(\mathrm{SR}_2(\mathcal I))$ for any involution generating set $\mathcal I$.
\end{remark}

\appendix

\section{Three-variable switching identities}\label{app:switch}

For completeness and self-containment, this appendix supplies the elementary three-variable algebraic manipulations invoked in Proposition~\ref{prop:toolbox}(d). These identities are isolated here so that the main structural progression of the square-root argument in Section~\ref{sec:criterion} remains uninterrupted.

\begin{lemma}\label{lem:switch}
Let $f:G\to H$ satisfy \eqref{eq:J1} and $f(e)=0$. Then for all $x,y,z\in G$,
\begin{align}
\label{eq:switch1}\tag{A.1.1}
f(xyz)&=2f(x)-f(xz^{-1}y^{-1}),\\
\label{eq:switch2}\tag{A.1.2}
f(xzy)&=2f(x)-f(xy^{-1}z^{-1}).
\end{align}
Consequently,
\[
f(xyz)-f(xzy)=f(xy^{-1}z^{-1})-f(xz^{-1}y^{-1}).
\]
\end{lemma}

\begin{proof}
Apply \eqref{eq:J1} with $(x,y)$ replaced by $(x,yz)$ to obtain
\[
f(xyz)+f\bigl(x(yz)^{-1}\bigr)=2f(x).
\]
Since $(yz)^{-1}=z^{-1}y^{-1}$, this gives
\[
f(xyz)+f(xz^{-1}y^{-1})=2f(x),
\]
which is \eqref{eq:switch1}.

Similarly, applying \eqref{eq:J1} with $(x,y)$ replaced by $(x,zy)$ yields
\[
f(xzy)+f\bigl(x(zy)^{-1}\bigr)=2f(x).
\]
Since $(zy)^{-1}=y^{-1}z^{-1}$, we get
\[
f(xzy)+f(xy^{-1}z^{-1})=2f(x),
\]
which is \eqref{eq:switch2}.

Subtracting \eqref{eq:switch2} from \eqref{eq:switch1}, we obtain
\[
f(xyz)-f(xzy)=f(xy^{-1}z^{-1})-f(xz^{-1}y^{-1}).
\]
The proof of the lemma is complete.
\end{proof}

\medskip

\subsection*{Funding}
None.

\subsection*{Data availability}
Data sharing is not applicable to this article as no datasets were generated or analyzed during the current study.

\subsection*{Conflict of interest}
The author declares no conflict of interest.


\begin{thebibliography}{99}

\bibitem{Aissi2024}
Y.~Aissi, D.~Zeglami, and A.~Mouzoun,
\emph{On a Pexider--Drygas functional equation on semigroups with an endomorphism},
Filomat \textbf{38} (2024), 11159--11169.
DOI: \href{https://doi.org/10.2298/FIL2431159A}{10.2298/FIL2431159A}.

\bibitem{Akkaoui2023}
A.~Akkaoui,
\emph{Jensen's functional equation on semigroups},
Acta Math.\ Hungar.\ \textbf{170} (2023), 261--268.
DOI: \href{https://doi.org/10.1007/s10474-023-01341-7}{10.1007/s10474-023-01341-7}.

\bibitem{BjornerBrenti2005}
A.~Bj\"orner and F.~Brenti,
\emph{Combinatorics of Coxeter Groups},
Graduate Texts in Mathematics, vol.~231, Springer, Berlin, 2005.
DOI: \href{https://doi.org/10.1007/3-540-27596-7}{10.1007/3-540-27596-7}.

\bibitem{Coxeter1934}
H.~S.~M.~Coxeter,
\emph{Discrete groups generated by reflections},
Ann.\ of Math.\ (2) \textbf{35} (1934), 588--621.
DOI: \href{https://doi.org/10.2307/1968753}{10.2307/1968753}.

\bibitem{Davis2008}
M.~W.~Davis,
\emph{The Geometry and Topology of Coxeter Groups},
London Mathematical Society Monographs Series, vol.~32, Princeton University Press, Princeton, NJ, 2008.
DOI: \href{https://doi.org/10.1515/9781400845941}{10.1515/9781400845941}.

\bibitem{FadliZeglamiKabbaj2016}
B.~Fadli, D.~Zeglami, and S.~Kabbaj,
\emph{A variant of Jensen's functional equation on semigroups},
Demonstr. Math.\ \textbf{49} (2016), 413--420.
DOI: \href{https://doi.org/10.1515/dema-2016-0035}{10.1515/dema-2016-0035}.

\bibitem{Friis2004}
P.~de Place Friis,
\emph{d'Alembert's and Wilson's equations on Lie groups},
Aequationes Math.\ \textbf{67} (2004), 12--25.
DOI: \href{https://doi.org/10.1007/s00010-002-2665-3}{10.1007/s00010-002-2665-3}.

\bibitem{Gallier2011}
J.~Gallier,
\emph{The Cartan--Dieudonn\'e theorem},
in \emph{Geometric Methods and Applications},
Texts in Applied Mathematics, vol.~38, Springer, New York, 2011, 231--280.
DOI: \href{https://doi.org/10.1007/978-1-4419-9961-0_8}{10.1007/978-1-4419-9961-0\_8}.

\bibitem{Householder1958}
A.~S.~Householder,
\emph{Unitary triangularization of a nonsymmetric matrix},
J.\ ACM \textbf{5} (1958), 339--342.
DOI: \href{https://doi.org/10.1145/320941.320947}{10.1145/320941.320947}.

\bibitem{Jensen1906}
J.~L.~W.~V.~Jensen,
\emph{Sur les fonctions convexes et les in\'egalit\'es entre les valeurs moyennes},
Acta Math.\ \textbf{30} (1906), 175--193.
DOI: \href{https://doi.org/10.1007/BF02418571}{10.1007/BF02418571}.

\bibitem{LeThai2011}
C.-T.~L\^e and T.-H.~Th\'ai,
\emph{Jensen's functional equation on the symmetric group $S_n$},
Aequationes Math.\ \textbf{82} (2011), 269--276.
DOI: \href{https://doi.org/10.1007/s00010-011-0089-7}{10.1007/s00010-011-0089-7}.

\bibitem{Ng1990}
C.~T.~Ng,
\emph{Jensen's functional equation on groups},
Aequationes Math.\ \textbf{39} (1990), 85--99.
DOI: \href{https://doi.org/10.1007/BF01833945}{10.1007/BF01833945}.

\bibitem{Ng1999}
C.~T.~Ng,
\emph{Jensen's functional equation on groups, II},
Aequationes Math.\ \textbf{58} (1999), 311--320.
DOI: \href{https://doi.org/10.1007/s000100050118}{10.1007/s000100050118}.

\bibitem{Ng2001}
C.~T.~Ng,
\emph{Jensen's functional equation on groups, III},
Aequationes Math.\ \textbf{62} (2001), 143--159.
DOI: \href{https://doi.org/10.1007/PL00000135}{10.1007/PL00000135}.

\bibitem{Ng2005}
C.~T.~Ng,
\emph{A Pexider--Jensen functional equation on groups},
Aequationes Math.\ \textbf{70} (2005), 131--153.
DOI: \href{https://doi.org/10.1007/s00010-005-2785-7}{10.1007/s00010-005-2785-7}.

\bibitem{Phuc2026}
\DD. V. Ph\'uc,
\emph{A Note on Torsion Confinement and Sharpness for a Reflected Jensen Functional Equation on Groups}
Aequat. Math. \textbf{100}, Article number 34 (2026).
DOI: \href{https://doi.org/10.1007/s00010-026-01279-5}{10.1007/s00010-026-01279-5}.

\bibitem{Stetkaer2003}
H.~Stetk\ae r,
\emph{On Jensen's functional equation on groups},
Aequationes Math.\ \textbf{66} (2003), 100--118.
DOI: \href{https://doi.org/10.1007/s00010-003-2679-5}{10.1007/s00010-003-2679-5}.

\bibitem{Stetkaer2013}
H.~Stetk\ae r,
\emph{Functional Equations on Groups},
World Scientific Publishing Co., Hackensack, NJ, 2013.
DOI: \href{https://doi.org/10.1142/8830}{10.1142/8830}.

\end{thebibliography}
\end{document}